\theoremstyle{definition}
\newtheorem{problem}{Problem}
\newtheorem{thm}{Theorem}
\newtheorem{definition}{Definition}
\newtheorem{observation}{Observation}
\newtheorem*{rem}{Remark}
\newtheorem{lem}{Lemma}
\newcommand\oprocendsymbol{\hbox{$\bullet$}}
\newcommand\oprocend{\relax\ifmmode\else\unskip\hfill\fi\oprocendsymbol}
\title{\LARGE \bf 
%
%
%
A Unified Approach to Optimally Solving Sensor Scheduling \\ and Sensor Selection Problems in Kalman Filtering
%
}
\author{Shamak Dutta, Nils Wilde, and Stephen L. Smith
\thanks{This research is supported in part by the Natural Sciences and Engineering
Research Council of Canada (NSERC) and by Nutrien Ltd.}
\thanks{S.\ Dutta and S.\ L.\ Smith are with the Department of Electrical and Computer Engineering,
        University of Waterloo, Canada 
        \{{\tt\small stephen.smith, shamak.dutta\}@uwaterloo.ca}.  N.\ Wilde is with the Cognitive Robotics Department, Delft University of Technology, Netherlands ({\tt\small N.Wilde@tudelft.nl}).}%
}
\begin{document}
\maketitle
\thispagestyle{empty}
\pagestyle{empty}

\begin{abstract}
We consider a general form of the sensor scheduling problem for state estimation of linear dynamical systems, which involves selecting sensors that minimize the trace of the Kalman filter error covariance (weighted by a positive semidefinite matrix) subject to polyhedral constraints on the selected sensors. This general form captures several well-studied problems including sensor placement, sensor scheduling with budget constraints, and Linear Quadratic Gaussian (LQG) control and sensing co-design. We present a mixed integer optimization approach that is derived by exploiting the optimality of the Kalman filter. While existing work has focused on approximate methods to specific problem variants, our work provides a unified approach to computing optimal solutions to the general version of sensor scheduling. In simulation, we show this approach finds optimal solutions for systems with 30 to 50 states in seconds.
\end{abstract}


\section{Introduction} \label{section:introduction}

In many large scale dynamical systems, it is practically infeasible to measure \emph{all} output variables due to communication or energy constraints. Thus, practical solutions often have to rely on only accessing a subset of the data. Optimizing the subset of output variables to measure in order to best perform state estimation is commonly known as the \emph{sensor scheduling problem}. This finds applications in multi-robot environmental monitoring \cite{le2009trajectory,lan2016rapidly,placed2023survey}, minimal controllability problems \cite{olshevsky2014minimal,tzoumas2015minimal,summers2015submodularity}, control and sensing co-design \cite{tzoumas2020lqg,zardini2021co,siami2020separation}, among others.

Sensor scheduling considers a stochastic linear system evolving in discrete time, which can be partially observed using sensors. Due to resource constraints (e.g., energy constraints), at each time step only a subset of sensors can be turned on. By limiting the number of active sensors, the sensor battery life is prolonged, allowing for long-term usage. The objective is to determine a sensor schedule, i.e., a subset of sensors to  be activated at each step, that minimizes the trace of the Kalman filter weighted error covariance while satisfying the resource constraints. The weight is provided by a positive semidefinite matrix which can encode several objectives including final state error and average error. When the chosen sensor set is not allowed to change over time, the problem is known as the \emph{sensor selection} problem. The sensor scheduling problem and its variants are challenging; existing approaches have provided approximate solutions via greedy algorithms \cite{shamaiah2010greedy,jawaid2015submodularity,tzoumas2016sensor,tzoumas2016near, zhang2017sensor,singh2017supermodular,ye2020complexity,chamon2020approximate,kohara2020sensor,tzoumas2020lqg}, convex relaxations \cite{joshi2008sensor,weimer2008relaxation,moshtagh2009optimal,mo2011sensor,wang2013convex,carmi2013sensor,munz2014sensor,dhingra2014admm,maity2022sensor}, and randomized algorithms \cite{siami2020deterministic,hashemi2020randomized,calle2021probabilistic}. However, in this paper we strive to compute \emph{optimal} solutions leveraging advances in mixed integer optimization.

While approximation algorithms have yielded useful insights into specific variants of the problem, an approach that computes optimal solutions to the general version has been elusive. The main difficulty lies in the analysis of the Kalman filter recursive equations in terms of the selected sensor subsets. However, the combinatorial nature of the problem suggests encoding the problem as an integer program.

Despite integer programming being NP-complete \cite{dasgupta2008algorithms} and generally being considered intractable beyond specific cases, the mixed integer optimization community has made tremendous theoretical and practical advances in last few decades with \emph{machine-independent} speedup factors of upto 580,000 \cite[Section 2.1]{bertsimas2016best}. One of the benefits is that modern solvers are anytime i.e., if terminated early, approximate solutions with suboptimality certificates are provided to the user. It is relatively straightforward to set up most combinatorial problems as integer programs by modelling the objects of interest as binary decision variables. However, the structure of the resulting objective may not be amenable to optimization. It is well known that the formulation plays an important role in solving integer programs \cite[Section 1.2]{gunluk2011perspective}. This is because the relaxation is used extensively to provide lower bounds in the branch and bound procedure in mixed integer optimization. Thus, if the relaxation has good structure (linearity, convexity) and is tight, one can find optimal solutions relatively quickly by pruning many parts of the search tree. The question we wish to answer is whether one can leverage the structure of the sensor scheduling problem to formulate mixed integer programs.

\emph{Contributions:}
Our main contribution is the formulation of the general sensor scheduling problem as a mixed integer program with a convex quadratic objective and linear constraints (Theorem~\ref{thm:miqp}). This enables us to leverage modern optimization solvers such as \textsc{Gurobi} \cite{gurobi} or \textsc{CPLEX} \cite{cplex2009v12} to compute optimal solutions to sensor scheduling. We accomplish this using the property of the optimality of the Kalman filter for the trace of a weighted error covariance. This enables us to optimize over the class of linear filters, which results in a convex minimization problem. This circumvents the need to work with the recursive form of the Kalman filter and the associated non-linear algebraic Riccati equations, which can be difficult to model. In simulations, we show that our approach is capable of finding optimal solutions to sensor selection and scheduling problems on systems with 35 to 50 states in seconds.

\emph{Related Work:}
Sensor scheduling has been studied in various forms in the literature. The difference in formulations generally boils down to the choice of objective function, defined in terms of the Kalman filter error covariance. The work in \cite{jawaid2015submodularity} characterizes the submodularity of commonly used functions such as the trace, maximum eigenvalue, and log determinant. This paper considers the trace as it is a direct measure of the estimation error which we seek to minimize. 

The trace is not a submodular function and does not benefit from the constant factor approximation guarantees provided by the greedy algorithm. The typical way to address this is to use surrogate objectives such as the log determinant, which is submodular and can be efficiently approximately maximized. However, \cite[Remark 1]{chamon2020approximate} notes that it is a poor proxy for minimizing the trace. There is a large body of work tackling the log determinant/maximum eigenvalue objective \cite{shamaiah2010greedy,jawaid2015submodularity,tzoumas2016near,tzoumas2016sensor,calle2021probabilistic,joshi2008sensor,moshtagh2009optimal} and various other controllability metrics \cite{siami2020deterministic,wang2013convex,dhingra2014admm}. Unfortunately, none of the aforementioned algorithms provide optimal solutions to trace minimization over a finite horizon.

The trace minimization of the \emph{steady state} error covariance has been considered \cite{gupta2006stochastic,zhao2014optimal,zhang2017sensor,ye2020complexity}. It was established to be NP-hard in \cite{zhang2017sensor}, which also demonstrated that greedy algorithms perform well in practice but without guarantees. In fact, there is no polynomial time constant factor approximation algorithm for this problem unless P $=$ NP \cite{ye2020complexity}.

On a \emph{finite horizon}, greedy algorithms are a popular tool for minimizing the trace of the error covariance due to its computational efficiency and empirical success. Recent work has analyzed their performance through the lens of approximate submodularity \cite{chamon2020approximate}, weak submodularity \cite{hashemi2020randomized,kohara2020sensor}, and identified strict conditions for submodularity \cite{singh2017supermodular}. However, \cite[Section VI]{chamon2020approximate} notes that the performance bounds tend to be quite loose in practice and greedy algorithms perform near-optimally on small scale instances. The work in this paper computes optimal solutions to instances not previously considered which may help in a more thorough empirical study of the greedy algorithm.

There has been limited work in computing the optimal solution to the sensor scheduling problem. Convex relaxations have provided useful insights \cite{maity2022sensor,weimer2008relaxation,wang2013convex,carmi2013sensor,dhingra2014admm} for approximate solutions. The work in \cite{vitus2012efficient} characterizes certain properties of the optimal solution and uses it to construct a suboptimal solution via tree pruning. The work closest to ours is \cite{mo2011sensor}, which formulates a quadratic program with $\ell_0$ constraints. The authors solve a relaxed version of the problem and construct an approximate solution through iterative reweighting. As a baseline comparison, we convert the $\ell_0$ constraint to an indicator constraint to form a MIQP, which we discuss in detail in Section~\ref{rem:comparison}.

\emph{Outline}: We begin with a brief introduction to linear estimation in Section~\ref{section:preliminaries}. This contains results about the optimality of linear estimators which apply to Kalman filtering. Section~\ref{section:problem_formulation} contains the problem formulation as well as a few examples of the problem types it captures. Our reformulation of sensor scheduling as a mixed integer optimization problem is presented in Section~\ref{section:mixed_integer_formulation}. We investigate its performance through numerical simulations in Section~\ref{section:results} and conclude with some directions for future work in Section~\ref{section:conclusions}.
\section{Preliminaries} \label{section:preliminaries}

    \emph{Notation}: Let $[n]$ denote the set of positive integers ranging from 1 to $n$. For a vector $v \in \mathbb{R}^n$, define $\text{supp}(v) := \{i \in [n]: v_i \neq 0\}$. Further, for any $S \subseteq [n]$, let $v_S \in \mathbb{R}^{|S|}$ be the vector with entries $v_i$ for $i \in S$. The set of $n \times n$ positive-definite matrices is denoted by $\mathbb{S}^n_{++}$. For a matrix $X \in \mathbb{R}^{m \times n}$, denote the Frobenius norm by $\lVert X \rVert_{F} := \left(\mathrm{trace}(X'X)\right)^{1/2}$. For two random vectors $x, y$ with dimensions $n$ and $m$ respectively, denote the matrix of covariances between $x$ and $y$ by $\Sigma_{xy} := \mathbb{E}[(x - \mathbb{E}[x]) (y - \mathbb{E}[y])'] \in \mathbb{R}^{n \times m}$. 
        \subsection{Linear Estimation} \label{subsec:llse}
         Let $x$ and $y$ be jointly distributed random vectors (of size $n$ and $m$ respectively) with zero mean and covariance given by
         \begin{equation}
             \begin{bmatrix}
                \Sigma_{xx} & \Sigma_{xy}\\
                \Sigma_{yx} & \Sigma_{yy} 
             \end{bmatrix} \in \mathbb{R}^{(n+m) \times (n+m)}.
         \end{equation}
         The following characterizes the optimal linear estimator.
        \begin{definition}[Theorem 2.1, \cite{anderson2012optimal}] \label{definition:linear_least_squares_estimator}
        The optimal linear estimator of $x$ given $y$ is defined as
        \begin{equation}
        \hat{x} := {K}_* {y},
        \end{equation}
        where the optimal coefficient matrix ${K}_* \in \mathbb{R}^{n \times m}$ is the solution that minimizes the expected squared error
        \begin{equation}
        \label{eq:Kstar}
            \begin{split}
                K_* &:= \underset{{K}}{\text{arg min }} \mathbb{E} \Big[ \left( x - {K} {y} \right)' ( x - Ky ) \Big] = \Sigma_{xy} \Sigma_{yy}^{-1}.
            \end{split}
        \end{equation}
        \end{definition}
    The resulting estimation error is
    \begin{equation}
        \mathbb{E} \left[ \left(x - K_* y\right)' \left(x - K_* y\right) \right] = \mathrm{trace} \left(\Sigma_{xx} - \Sigma_{xy} \Sigma_{yy}^{-1} \Sigma_{yx}\right).
    \end{equation}
    The optimal linear estimator also satisfies a stronger property, given in the following lemma.  This is a relatively straightforward extension of the previous result, and we include the proof for completeness.
    \begin{lem} \label{lem:linear-estimator-optimality}
        Given a positive semi-definite matrix $M = Q'Q \in \mathbb{S}^{n}_{+}$, the coefficient matrix $K_*$ in~\eqref{eq:Kstar} is also minimizes the $M$-weighted expected square error:
        \begin{equation} \label{eqn:optimality-linear}
            K_{*} = \underset{{K} \in \mathbb{R}^{n \times m}}{\text{arg min }} \mathbb{E}\Big[ (x - Ky)' M ( x - Ky) \Big].
        \end{equation}
    \end{lem}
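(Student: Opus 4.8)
The plan is to reduce the $M$-weighted least-squares problem~\eqref{eqn:optimality-linear} to the unweighted one already resolved in Definition~\ref{definition:linear_least_squares_estimator}. Writing $M = Q'Q$ for some $Q \in \mathbb{R}^{p \times n}$, the first step is the observation that for every $K \in \mathbb{R}^{n\times m}$,
\[
\mathbb{E}\big[(x - Ky)'M(x-Ky)\big] = \mathbb{E}\big[(Qx - (QK)y)'(Qx - (QK)y)\big],
\]
so that the weighted objective is exactly the \emph{unweighted} expected squared error of estimating the zero-mean random vector $z := Qx$ from $y$ using the coefficient matrix $QK$. Thus, as a function of $K$, the left-hand side equals $f(QK)$, where $f(L) := \mathbb{E}[(z - Ly)'(z-Ly)]$.

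Next I would apply Definition~\ref{definition:linear_least_squares_estimator} to the pair $(z,y)$: over all $L \in \mathbb{R}^{p\times m}$, the map $f$ is minimized at $L_* = \Sigma_{zy}\Sigma_{yy}^{-1}$. Since $z$ is zero-mean and $\Sigma_{zy} = \mathbb{E}[Qx\,y'] = Q\Sigma_{xy}$, this optimal coefficient is $L_* = Q\Sigma_{xy}\Sigma_{yy}^{-1} = QK_*$, using the formula for $K_*$ in~\eqref{eq:Kstar}. Consequently, for every $K$, choosing $L = QK$ in $f(L) \ge f(L_*)$ gives
\[
\mathbb{E}\big[(x-Ky)'M(x-Ky)\big] \;=\; f(QK) \;\ge\; f(QK_*) \;=\; \mathbb{E}\big[(x - K_* y)'M(x - K_* y)\big],
\]
with equality at $K = K_*$, which proves that $K_*$ is a minimizer of~\eqref{eqn:optimality-linear}.

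An equivalent self-contained route, which I would include if a direct argument is preferred, is via the orthogonality of the Kalman/linear-estimator residual: since $K_* = \Sigma_{xy}\Sigma_{yy}^{-1}$, one has $\mathbb{E}[(x - K_* y)y'] = \Sigma_{xy} - K_*\Sigma_{yy} = 0$. Decomposing $x - Ky = (x - K_* y) + (K_* - K)y$, expanding the $M$-quadratic form, and using cyclicity of the trace, the cross term equals $2\,\mathrm{trace}\big(Q(K_* - K)\,\mathbb{E}[y(x-K_* y)']\,Q'\big) = 0$, leaving $\mathbb{E}[(x-Ky)'M(x-Ky)] = \mathbb{E}[(x-K_*y)'M(x-K_*y)] + \mathbb{E}[(Q(K_*-K)y)'(Q(K_*-K)y)] \ge \mathbb{E}[(x-K_*y)'M(x-K_*y)]$. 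There is no serious obstacle in either route; the only point deserving a word of care is that when $M$ is merely positive semidefinite ($Q$ not full column rank) the minimizer need not be unique, so the statement is that $K_*$ \emph{attains} the minimum — which is all that is used subsequently — rather than being the unique optimum. The reduction above is agnostic to this, since it only compares objective values, and invertibility of $\Sigma_{yy}$ is inherited from the standing assumptions already in force for Definition~\ref{definition:linear_least_squares_estimator}.
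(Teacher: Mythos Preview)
Your proposal is correct. Your \emph{second} route --- the orthogonality/Pythagorean decomposition $x-Ky=(x-K_*y)+(K_*-K)y$ with vanishing cross term --- is essentially the paper's argument in different clothing: the paper expands $\mathrm{trace}\big(Q\,\mathbb{E}[(x-Ky)(x-Ky)']\,Q'\big)$ in the covariances and completes the square to obtain
\[
\mathrm{trace}\Big(Q\big(\Sigma_{xx}-\Sigma_{xy}\Sigma_{yy}^{-1}\Sigma_{yx}\big)Q'\Big)
+\big\lVert \Sigma_{yy}^{1/2}(K-K_*)'Q'\big\rVert_F^2,
\]
which is exactly your two terms written algebraically. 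Your \emph{primary} route, however, is genuinely different: you reduce to the unweighted case by setting $z=Qx$ and observing that the $M$-weighted objective is $f(QK)$ for the unweighted loss $f$ of estimating $z$ from $y$, then invoke Definition~\ref{definition:linear_least_squares_estimator} to get the minimizer $L_*=\Sigma_{zy}\Sigma_{yy}^{-1}=QK_*$. This buys you a shorter proof with no new computation (everything is inherited from the unweighted result), whereas the paper's completion-of-squares is self-contained and yields the explicit decomposition of the objective, which can be useful if one later wants the exact gap. Your closing remark that for merely positive semidefinite $M$ the minimizer need not be unique --- so the claim is that $K_*$ \emph{attains} the minimum --- is a point the paper does not make but is worth recording.
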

    \begin{proof}
        Consider the trace of scalar term in the minimization,
        \begin{equation}
            \begin{split}
                \mathrm{trace} &\Big( \mathbb{E}\left[ (x - Ky)' Q' Q (x - Ky)\right] \Big) = \\
                & \mathrm{trace} \Big( Q \mathbb{E}\left[ (x - Ky) (x - Ky)' \right] Q' \Big) \\
                & =\mathrm{trace} \Big(Q \Big(\Sigma_{xx} - K \Sigma_{yx} - \Sigma_{xy} K' + K \Sigma_{yy} K' \Big) Q'\Big)\\
                &= \mathrm{trace} \Bigg( Q \Big(\Sigma_{xx} -  \Sigma_{xy}\Sigma_{yy}^{-1}\Sigma_{yx}\Big) Q'\\
                &+ Q \Big(K - \Sigma_{xy} \Sigma_{yy}^{-1}\Big) \Sigma_{yy} 
                \Big(K' - \Sigma_{yy}^{-1} \Sigma_{yx} \Big) Q' \Bigg).
            \end{split}
        \end{equation}
        The first term does not depend on $K$ and the second term is non-negative since it is equal to $\lVert \Sigma_{yy}^{1/2}(K' - \Sigma_{yy}^{-1} \Sigma_{yx}) Q'\rVert_{F}$. The second term can be made zero by setting $K = \Sigma_{xy} \Sigma_{yy}^{-1}$ which is the coefficient of the optimal linear estimator.
    \end{proof}

\section{Problem Formulation} \label{section:problem_formulation}

Our setup closely follows \cite{mo2011sensor} which captures a variety of interesting sensor scheduling problems. 

\emph{Dynamical System}: Consider the following linear system:
    \begin{equation} \label{eqn:dynamics}
        \begin{split}
            x_{k+1} &= A x_k + w_k,\\
            y_{k} &= C x_k + v_k,
        \end{split}
    \end{equation}
where the process noise $w_k$, measurement noise $v_k$, and initial state $x_1$ are zero mean uncorrelated random variables. For all $k \geq 1$, we assume the covariance of $w_k$ and $v_k$ are given by $W \in \mathbb{S}^{n}_{++}$ and $V \in \mathbb{S}^{m}_{++}$ respectively and the \emph{a priori} covariance of the initial state $x_1$ is given by $\Sigma_{1|0} \in \mathbb{S}^n_{++}$. Further, we assume the state $x_k \in \mathbb{R}^n$ and the vector resulting from all $m$ sensor measurements is $y_k \in \mathbb{R}^m$.

\emph{Sensor Schedule}: For a given time horizon $T$, the total number of measurements is $mT$ since each step yields $m$ sensor readings. For $i \in [mT]$, consider binary indicator variables $\gamma_i \in \{0,1\}$ such that if sensor $j \in [m]$ is on at time step $k \in [T]$, then $\gamma_{m(k-1)+j}=1$. The binary vector ${\gamma} = [\gamma_1, \ldots, \gamma_{mT}] \in \{0,1\}^{mT}$ is called a \emph{sensor schedule}.

\emph{Kalman filter}: The Kalman filter (KF) is usually described recursively in terms of the estimates from the previous time step. However, viewing it directly as a linear estimator over the set of sensor measurements will be more useful for our purposes. Let $\gamma \in \{0,1\}^{mT}$ be a sensor schedule over a horizon $T$ with $S_{\gamma} := \text{supp}(\gamma)$. For each $k \in [T]$, define
    \begin{equation} \label{eqn:kf-defns}
        \begin{split}
            {Y}_k &:= \Big[y_{1}', \ldots, y_{k}'\Big]' \in \mathbb{R}^{mk} \; (\text{\footnotesize{measurements until time $k$}})\\
            \hat{x}_{k|k}^{\gamma} &:= K_{*,k}^{\gamma} Y_{k, \gamma} \in \mathbb{R}^n \; (\text{\footnotesize{KF estimate using selected measurements}})\\
            e_k^{\gamma} &:= x_k - \hat{x}_{k|k}^{\gamma} \in \mathbb{R}^n \; (\text{\footnotesize{KF estimation error}})\\
            \Sigma^{\gamma}_{k|k} &:= \Sigma_{e_k^{\gamma} e_k^{\gamma}} \in \mathbb{S}^{n}_{+} \; (\text{\footnotesize{KF posterior error covariance}})
        \end{split}
    \end{equation}
Note that $Y_{k, \gamma} \in \mathbb{R}^{|S_{\gamma}|}$ only uses the measurement variables selected by the sensor schedule $\gamma$. Further, $K^{\gamma}_{*,k} \in \mathbb{R}^{n \times |S_{\gamma}|}$ contains the optimal Kalman filter coefficients for estimating the state $x_k$ using measurement variables $Y_{k, \gamma}$. This is different from the gain matrix used in the recursive form of the filter. Each row $i \in [n]$ of $K^{\gamma}_{*,k}$ contains the coefficients of the optimal linear combination of the measurement variables used in estimating the $i^{\text{th}}$ entry of the state $x_k$.

In summary, given the schedule $\gamma$, the Kalman filter computes the optimal linear estimate $\hat{x}^{\gamma}_{k|k}$ of the state $x_k$ and provides the \emph{a posteriori} error covariance estimate $\Sigma^{\gamma}_{k|k}$. Our goal is to compute the schedule $\gamma$ that minimizes a function related to $\Sigma_{k|k}^{\gamma}$, subject to resource constraints on $\gamma$.
\begin{problem}[Sensor Scheduling] \label{problem:1}
 Given a time horizon $T > 0$, cost matrices $Q_k \in \mathbb{R}^{n' \times n}$ with $M_k := Q_k'Q_k \in \mathbb{S}^{n}_{+}$, constraint matrix $H \in \mathbb{R}^{h \times mT}$, and budget $b \in \mathbb{R}^h$, compute the sensor schedule $\gamma$ that solves
    \begin{equation} \label{eqn:problem}
        \begin{aligned}
        & \underset{\gamma \in \{0,1\}^{mT}}{\text{minimize}}
        & & \sum_{k=1}^T \mathrm{trace}\left( Q_k \Sigma_{k|k}^{\gamma} Q_k' \right)\\
        & \text{subject to}
        & & H \gamma \leq b.
        \end{aligned}
    \end{equation}
\end{problem}
The objective and constraints capture several interesting problems. For example, consider the following objectives.
\begin{enumerate}
    \item \emph{Total Error}: $Q_k = \theta_k \mathbb{I}_n, k \in [T]$, where $\theta_k \in \mathbb{R}_{\geq 0}$.
    \item \emph{Final State Error}: $Q_1 = \ldots = Q_{T-1} = 0, Q_T = \mathbb{I}_n$.
    \item \emph{LQG Sensing Design}: It is known that the optimal strategy for LQG control is to provide state estimates via Kalman filtering and then perform LQR control using the state estimate in place of the true state. A similar principle holds in LQG control and sensing co-design \cite[Theorem 1]{tzoumas2020lqg} where one designs a sensor selection policy as well as the optimal controller. The result states the budgeted sensor selection problem can be decoupled from the control design where the sensor schedule is computed to optimize $\mathrm{trace}(\Theta \Sigma_{k|k}^{\gamma})$, and $\Theta$ is computed from the LQG system parameters.
\end{enumerate}
Further, the polyhedral constraints in $H \gamma \leq b$ are general and capture different types of resource constraints.
\begin{enumerate}
    \item \emph{Sensor Selection}: One commits to a sensor subset of size $p$ for all steps: $\sum_{i=1}^m \gamma_i = p$ and $\gamma_i = \gamma_{i+jm}$, for $i \in [m], j \in [T-1]$ effectively using only $m$ variables.
    \item \emph{Sensor Scheduling}: One selects a sensor subset for each time step $k \in [T]$ of size $p$: $\sum_{i=1}^{m} \gamma_{(k-1)m + i} = p$. 
    \item \emph{Energy Constraints}: If sensor $i$ incurs an energy cost $\alpha_i$ each time it is switched on and has an energy budget $\beta_i$, then the constraint can be represented as $\sum_{k=1}^T \gamma_{(k-1)m+i} \leq \beta_i / \alpha_i$.
\end{enumerate}
Various problems in the literature are modeled as combinations of these objectives and constraints. For example, objective 1) and constraint 1) model the \emph{sensor selection} problem \cite{weimer2008relaxation,carmi2013sensor, jawaid2015submodularity,tzoumas2016sensor,singh2017supermodular,chamon2020approximate,kohara2020sensor,hashemi2020randomized}. Combining the accumulated error in objective 2) and constraint 2) is sensor scheduling with budget constraints \cite{vitus2012efficient,maity2022sensor}.
\section{Mixed Integer Program for Sensor Scheduling} \label{section:mixed_integer_formulation}

    This section describes our MIQP formulation. As discussed in the introduction, one requires good formulations in mixed integer programming to compute optimal solutions relatively quickly \cite{gunluk2011perspective}. For example, if the objective is convex (when the binary variables are relaxed), one can quickly solve the ensuing convex optimization problem to get a lower bound on the optimal solution to the mixed integer program. This helps in eliminating parts of the search tree by comparing lower bounds (by solving the relaxed problem) and upper bounds given by feasible solutions. In the context of sensor scheduling, it is not immediately obvious how one would model the objective in \eqref{eqn:problem} as a convex function.

    Our main contribution is a MIQP formulation whose objective is convex in the continuous variables. This is presented in Section~\ref{subsec:binary-convex-reform} with Theorem~\ref{thm:miqp} giving the main result. Note that the results in Section~\ref{subsec:binary-convex-reform} depend on the covariance matrices between the states and observations in system \eqref{eqn:dynamics}, which are fully specified in Section~\ref{subsec:covariance-matrices}.
    
    \subsection{Binary Convex Reformulation} \label{subsec:binary-convex-reform}
    We wish to rewrite the objective in \eqref{eqn:problem} purely in terms of continuous variables. By exploiting the optimality of the Kalman filter, we reformulate the summands in \eqref{eqn:problem} as convex minimization problems where the continuous variables are the coefficients of linear filters. The main idea is to optimize over the class of linear filters using \emph{all} sensor measurements with the constraint that a measurement variable $i$ can only be used if its corresponding entry in the schedule satisfies $\gamma_i = 1$. This is established in Lemma~\ref{lem:reformulation}. 

    \begin{lem}[Reformulation of \eqref{eqn:problem}] \label{lem:reformulation}
        Given a schedule $\gamma \in \{0,1\}^{mT}$ and a time step $k \in [T]$, each summand in \eqref{eqn:problem} can be represented as the following optimization problem
        \begin{equation} \label{eqn:lem2}
            \begin{split}
                \mathrm{trace} \left( Q_k \Sigma_{k|k}^{\gamma} Q_k' \right) = \min \Big\{ c_k(K)&: [K]_{ij} ( 1-\gamma_j) = 0, \\
                &i \in [n], j \in [mk],\\
                &K \in \mathbb{R}^{n \times mk}\Big\},
            \end{split}
        \end{equation}
        where $c_k: \mathbb{R}^{n \times mk} \rightarrow \mathbb{R}_{\geq 0}$ is the expected squared error of a linear filter specified by coefficients $K$ and is defined as
        \begin{equation} \label{eqn:lem2-defn}
            \begin{split}
                c_k(K) &:= \mathbb{E} \Big[ \left(x_k - K Y_k\right)' M_k \left( x_k - K Y_k\right)\Big]\\
                &= \mathrm{trace} \Big(M_k \Big( K \Sigma_{Y_k Y_k} K' - 2 \Sigma_{x_k Y_k} K' + \Sigma_{x_k x_k} \Big)\Big).
            \end{split}
        \end{equation}
    \end{lem}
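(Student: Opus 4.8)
The plan is to reduce the claimed identity to Lemma~\ref{lem:linear-estimator-optimality} by showing that the constrained minimization on the right-hand side of \eqref{eqn:lem2} is solved precisely by the Kalman filter coefficient matrix $K^{\gamma}_{*,k}$, suitably embedded into $\mathbb{R}^{n \times mk}$. First I would parametrize the feasible set: the constraint $[K]_{ij}(1-\gamma_j) = 0$ forces $[K]_{ij} = 0$ whenever $\gamma_j = 0$, i.e. whenever $j \notin S_{\gamma}$, while columns indexed by $j \in S_{\gamma} \cap [mk]$ are free. Hence any feasible $K$ is in bijection with a matrix $\tilde{K} \in \mathbb{R}^{n \times |S_{\gamma} \cap [mk]|}$ via zero-padding, and for every feasible $K$ we have the identity $K Y_k = \tilde{K} Y_{k,\gamma}$, since the deleted columns multiply exactly the measurement entries that are discarded by the restriction $Y_{k,\gamma}$.

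Next I would rewrite the objective under this bijection. Because $K Y_k = \tilde K Y_{k,\gamma}$, the cost $c_k(K) = \mathbb{E}\big[(x_k - \tilde K Y_{k,\gamma})' M_k (x_k - \tilde K Y_{k,\gamma})\big]$ depends on $K$ only through $\tilde K$. So the right-hand side of \eqref{eqn:lem2} equals
\begin{equation*}
    \min_{\tilde K \in \mathbb{R}^{n \times |S_{\gamma}\cap[mk]|}} \mathbb{E}\Big[(x_k - \tilde K Y_{k,\gamma})' M_k (x_k - \tilde K Y_{k,\gamma})\Big].
\end{equation*}
Applying Lemma~\ref{lem:linear-estimator-optimality} with $x = x_k$, $y = Y_{k,\gamma}$, and weight $M_k = Q_k'Q_k$, the minimizer is $\tilde K = \Sigma_{x_k Y_{k,\gamma}} \Sigma_{Y_{k,\gamma} Y_{k,\gamma}}^{-1}$, which is exactly the definition of the Kalman filter coefficient matrix $K^{\gamma}_{*,k}$ in \eqref{eqn:kf-defns}. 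Substituting this optimum back in, the minimum value is $\mathbb{E}\big[(x_k - \hat x^{\gamma}_{k|k})' M_k (x_k - \hat x^{\gamma}_{k|k})\big] = \mathbb{E}\big[(e^{\gamma}_k)' M_k e^{\gamma}_k\big]$, and the cyclic property of the trace turns this into $\mathrm{trace}(M_k \Sigma^{\gamma}_{k|k}) = \mathrm{trace}(Q_k \Sigma^{\gamma}_{k|k} Q_k')$, which is the left-hand side. A small point to dispatch along the way is that $\Sigma_{Y_{k,\gamma}Y_{k,\gamma}}$ is invertible — this follows from $V \in \mathbb{S}^m_{++}$ and $W, \Sigma_{1|0} \in \mathbb{S}^n_{++}$, which make the joint covariance of all measurements positive definite, hence so is any principal submatrix; alternatively one notes Lemma~\ref{lem:linear-estimator-optimality} and Definition~\ref{definition:linear_least_squares_estimator} already presume $\Sigma_{yy}^{-1}$ exists in this setting.

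I also want to double-check the second equality in \eqref{eqn:lem2-defn}, the expansion of $c_k(K)$: it is the routine computation $\mathbb{E}[(x_k - KY_k)'M_k(x_k-KY_k)] = \mathrm{trace}\,M_k\,\mathbb{E}[(x_k-KY_k)(x_k-KY_k)']$ followed by expanding the outer product, using $\mathbb{E}[x_k Y_k'] = \Sigma_{x_k Y_k}$ and zero means. I do not expect any genuine obstacle here; the only thing requiring care is the bookkeeping of which columns of $K$ survive and the verification that restricting $K$ to those columns is genuinely equivalent to working with $Y_{k,\gamma}$ rather than $Y_k$ — i.e. that the formulation "optimize over all filters but forbid forbidden columns" coincides with "optimize over filters on the selected measurements." Once that equivalence is stated cleanly, the rest is a direct invocation of the already-proved optimality lemma.
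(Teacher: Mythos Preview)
Your proposal is correct and follows essentially the same route as the paper's proof: reduce the constrained minimization over $K\in\mathbb{R}^{n\times mk}$ to an unconstrained minimization over $\tilde K\in\mathbb{R}^{n\times |S_\gamma\cap[mk]|}$ via the bijection induced by zero-padding, invoke Lemma~\ref{lem:linear-estimator-optimality} to identify the minimizer as $K^{\gamma}_{*,k}$, and then use the cyclic property of the trace. Your additional remarks on the invertibility of $\Sigma_{Y_{k,\gamma}Y_{k,\gamma}}$ and on the expansion in \eqref{eqn:lem2-defn} are not in the paper's proof but are harmless (and arguably welcome) sanity checks.
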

    \begin{proof}
    Let $S_{\gamma} := \text{supp}(\gamma) \cap [mk]$. Consider the RHS in \eqref{eqn:lem2}. The constraint gives the following equivalence: $\gamma_j = 0 \implies K_j = 0$ i.e., the $j^{\text{th}}$ column of $K$ is the zero vector. This implies that when taking a linear combination $K Y_k$, the measurements in $Y_k$ corresponding to the zero column vectors in $K$ can be dropped. Specifically, let $K^\gamma \in \mathbb{R}^{n \times |S_{\gamma}|}$ be the submatrix of $K$ with column set $ {S_{\gamma}}$ and let $Y_{k, \gamma} \in \mathbb{R}^{|S_{\gamma}|}$ be the measurement vector constructed by selecting the entries corresponding to ${S_{\gamma}}$ in $Y_k$. Then, we can rewrite the minimization problem in \eqref{eqn:lem2} as
        \begin{equation} \label{eqn:lem2-reform}
            \begin{split}
                \underset{K \in \mathbb{R}^{n \times |S_{\gamma}|}}{\min} \Bigg\{ \mathbb{E} \Big[ (x_k - K^{\gamma} Y_{k, \gamma})' M_k (x_k - K^{\gamma} Y_{k, \gamma}) \Big] \Bigg\}.
            \end{split}
        \end{equation}
    Now, we can apply Lemma~\ref{lem:linear-estimator-optimality} which tells us the solution to the above minimization problem is the optimal linear estimator of $x_k$ given $Y_{k, \gamma}$ i.e., the Kalman filter with coefficient matrix $K^{\gamma}_{*,k} \in \mathbb{R}^{n \times |S_{\gamma}|}$. Using the definitions in \eqref{eqn:kf-defns}, the resulting error is obtained by plugging in $K^{\gamma}_{*,k}$ and taking the trace of the scalar term in \eqref{eqn:lem2-reform} yielding
    \begin{equation}
        \begin{split}
             &\mathbb{E} \Big[ (x_k - K^{\gamma}_{*, k} Y_{k, \gamma})' Q_k' Q_k (x_k - K^{\gamma}_{*, k} Y_{k, \gamma}) \Big] \\
             &= \mathrm{trace} \Big( Q_k \Sigma^{\gamma}_{k|k} Q_k'\Big),
        \end{split}
    \end{equation}
    where we have used the property $\mathrm{trace}(AB) = \mathrm{trace}(BA)$ for any conformable matrices $A, B$.
    \end{proof}

    Note that we delay the definition of the covariance matrices appearing in $c_k$ until Section~\ref{subsec:covariance-matrices} since it is not crucial to understanding the formulated MIQP. The interested reader can refer directly to Equation~\eqref{eqn:matrix-defns} for the exact definition of the covariance matrices. Next, we show that $c_k$ is convex.
    \begin{lem}[Convexity] \label{lem:convexity}
    The function $c_k: \mathbb{R}^{n \times mk} \rightarrow \mathbb{R}_{\geq 0}$ defined in Lemma~\ref{lem:reformulation} is convex.
    \end{lem}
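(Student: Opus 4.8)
The plan is to split $c_k$ into its constant, affine, and quadratic parts in the entries of $K$ and argue that only the quadratic part matters for convexity. From the closed form in~\eqref{eqn:lem2-defn}, the term $\mathrm{trace}(M_k \Sigma_{x_k x_k})$ is a constant and $-2\,\mathrm{trace}(M_k \Sigma_{x_k Y_k} K')$ is linear, hence affine, in $K$; neither affects convexity. So it suffices to show that the map $K \mapsto \mathrm{trace}(M_k K \Sigma_{Y_k Y_k} K')$ is convex.

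First I would record the two facts the argument rests on: $M_k = Q_k'Q_k \in \mathbb{S}^n_+$ by assumption, and $\Sigma_{Y_k Y_k}$ is a covariance matrix, hence symmetric positive semidefinite (in fact positive definite once the covariance matrices of Section~\ref{subsec:covariance-matrices} are plugged in, using $V \in \mathbb{S}^m_{++}$, but semidefiniteness is all that is needed here). Writing $\Sigma_{Y_k Y_k} = LL'$ for some real matrix $L$ (e.g.\ a symmetric square root or Cholesky factor) and using the cyclic property of the trace,
\[
\mathrm{trace}\!\left(M_k K \Sigma_{Y_k Y_k} K'\right) = \mathrm{trace}\!\left((Q_k K L)(Q_k K L)'\right) = \lVert Q_k K L \rVert_F^2 .
\]

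Next I would observe that $K \mapsto Q_k K L$ is a linear map between finite-dimensional spaces and that $\lVert \cdot \rVert_F^2$ is convex (it is a positive semidefinite quadratic form, or the composition of the convex nondecreasing function $t \mapsto t^2$ on $\mathbb{R}_{\geq 0}$ with a norm); a convex function precomposed with an affine map is convex, so the quadratic term is convex in $K$, and adding the affine and constant terms preserves convexity. An equivalent route is to vectorize: $\mathrm{trace}(M_k K \Sigma_{Y_k Y_k} K') = \mathrm{vec}(K)'\,(\Sigma_{Y_k Y_k} \otimes M_k)\,\mathrm{vec}(K)$, where $\Sigma_{Y_k Y_k} \otimes M_k$ is positive semidefinite because the Kronecker product of positive semidefinite matrices is positive semidefinite, so the Hessian of $c_k$ equals $2(\Sigma_{Y_k Y_k} \otimes M_k) \succeq 0$.

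I do not expect a genuine obstacle here: the only point requiring care is that the argument depends on $M_k$ and $\Sigma_{Y_k Y_k}$ being positive semidefinite, the latter being a property of the covariance matrices specified later in Section~\ref{subsec:covariance-matrices}; granting that, convexity is immediate from the factorization above.
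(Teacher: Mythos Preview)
Your argument is correct, but it differs from the paper's. The paper writes $c_k = g_k \circ h$ with $g_k(X) = \mathrm{trace}(M_k X)$ and $h(K) = K \Sigma_{Y_k Y_k} K' - \Sigma_{x_k Y_k} K' - K \Sigma_{Y_k x_k} + \Sigma_{x_k x_k}$, and then invokes the generalized composition rule from \cite[Section~3.6.2]{boyd2004convex}: $g_k$ is linear and nondecreasing in the Loewner order (since $M_k \succeq 0$), and $h$ is matrix-convex (since $\Sigma_{Y_k Y_k} \succeq 0$), so the composition is convex. You instead isolate the quadratic part and show it equals $\lVert Q_k K L\rVert_F^2$ (or, equivalently, that its Hessian after vectorization is $2(\Sigma_{Y_k Y_k} \otimes M_k) \succeq 0$). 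Your route is more elementary and self-contained---it avoids matrix-valued convexity and the associated composition theorem, and it even delivers the Hessian explicitly---while the paper's route is more modular, plugging directly into standard convex-analysis references. Both rest on the same two hypotheses you identified, $M_k \succeq 0$ and $\Sigma_{Y_k Y_k} \succeq 0$.
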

    \begin{proof}
        To prove convexity, define the following matrix functions $g: \mathbb{S}^{n} \rightarrow \mathbb{R}$ and $h: \mathbb{R}^{n \times mk} \rightarrow \mathbb{S}^n$,
        \begin{equation}
            \begin{split}
                g_k(X) &:= \mathrm{trace}(M_k X)\\
                h(K) &= K \Sigma_{Y_k Y_k} K' - \Sigma_{x_k Y_k} K' - K \Sigma_{Y_k x_k} + \Sigma_{x_k x_k}.
            \end{split}
        \end{equation}
        Then, we have $c_k = g_k \circ h$. Using a composition theorem for convexity, $c_k$ is convex if $g$ is convex and non-decreasing and $h$ is convex \cite[Section 3.6.2]{boyd2004convex}. Convexity of $g$ follows from the linearity of the trace operator and since $M_k \in \mathbb{S}^{n}_{+}$, it is non-decreasing \cite[Example 3.46]{boyd2004convex}. Since $\Sigma_{Y_k Y_k} \in \mathbb{S}^{m}_{+}$, $h$ is also convex \cite[Example 3.49]{boyd2004convex}.
    \end{proof}

    Finally, we establish that the sensor scheduling problem \eqref{eqn:problem} can be represented as a mixed integer convex quadratic program, which can be tackled by solvers like \textsc{Gurobi} \cite{gurobi}.
    \begin{thm}[MIQP for Sensor Scheduling] \label{thm:miqp}
    The sensor scheduling problem~\eqref{eqn:problem} can be formulated as the following mixed-integer convex optimization problem
        \begin{equation} \label{eqn:miqp}
            \begin{aligned}
                & \underset{\substack{\gamma \in \{0,1\}^{mT}\\ K_k \in \mathbb{R}^{n \times mk}}}{\mathrm{min}}
                & & \sum_{k=1}^T c_k(K_k)\\
                & \text{subject to}
                & & H \gamma \leq b, \\
                &&& [K_{k}]_{ij}\left( 1-\gamma_j \right) = 0, i \in [n], \; j \in [mk], \; k \in [T]
            \end{aligned} 
        \end{equation}
where the functions $c_k(\cdot)$ are given by Lemma~\ref{lem:reformulation}.
    \end{thm}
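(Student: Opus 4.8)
The plan is to assemble Theorem~\ref{thm:miqp} directly from Lemma~\ref{lem:reformulation} and Lemma~\ref{lem:convexity}; the only genuine step is a careful justification that the per-step inner minimizations can be hoisted into a single joint minimization over $(\gamma, K_1, \dots, K_T)$. First I would fix a feasible schedule $\gamma$ (i.e., one with $H\gamma \le b$) and rewrite the objective of Problem~\ref{problem:1} term by term: by Lemma~\ref{lem:reformulation}, the $k$-th summand $\mathrm{trace}(Q_k \Sigma_{k|k}^{\gamma} Q_k')$ equals $\min\{c_k(K_k) : [K_k]_{ij}(1-\gamma_j) = 0,\ i \in [n],\ j \in [mk],\ K_k \in \mathbb{R}^{n \times mk}\}$, with the minimum attained at the (zero-padded) Kalman coefficient matrix $K^{\gamma}_{*,k}$.

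Second, I would observe that this family of minimizations is \emph{separable}: the variable $K_k$ appears in exactly one summand, and its feasible set is determined solely by $\gamma$, not by any other $K_{k'}$. Hence, for each fixed feasible $\gamma$,
\begin{equation*}
\sum_{k=1}^T \mathrm{trace}\!\left(Q_k \Sigma_{k|k}^{\gamma} Q_k'\right) = \sum_{k=1}^T \min_{K_k}\left\{ c_k(K_k) : [K_k]_{ij}(1-\gamma_j)=0 \right\} = \min_{K_1,\dots,K_T}\left\{ \sum_{k=1}^T c_k(K_k) : [K_k]_{ij}(1-\gamma_j)=0 \ \forall i,j,k \right\}.
\end{equation*}
Taking the minimum of both sides over all $\gamma \in \{0,1\}^{mT}$ with $H\gamma \le b$ shows that the optimal value of \eqref{eqn:problem} equals that of \eqref{eqn:miqp}, and that a minimizing $\gamma$ for \eqref{eqn:miqp} is a minimizing schedule for \eqref{eqn:problem}.

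Third, I would check that \eqref{eqn:miqp} is of the asserted type. The decision variables split into binary $\gamma$ and continuous $K_k$; the constraint $H\gamma \le b$ is linear; and the objective $\sum_{k=1}^T c_k(K_k)$ is convex in the continuous variables, being a sum of the functions $c_k$, each convex by Lemma~\ref{lem:convexity} (indeed a convex quadratic in $K_k$, cf.\ \eqref{eqn:lem2-defn}). The coupling constraints $[K_k]_{ij}(1-\gamma_j) = 0$ are bilinear in $(\gamma, K_k)$, but since $\gamma_j \in \{0,1\}$ each such equality is an indicator constraint (equivalently a big-$M$ constraint) that becomes linear in $K_k$ once $\gamma_j$ is fixed — precisely the form handled natively by solvers such as \textsc{Gurobi}; I would make this modeling remark explicit so the reader sees why the formulation is tractable.

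The step requiring the most care — though conceptually routine — is the sum/min interchange and, in particular, the claim that the joint feasible set is exactly the product of the per-step feasible sets; this hinges on the feasible set for $K_k$ depending only on $\gamma$ and on the objective being a sum with no cross terms between distinct $K_k$. Everything else is bookkeeping: substituting Lemma~\ref{lem:reformulation}, invoking Lemma~\ref{lem:convexity} for convexity, and recording the indicator-constraint reformulation of the bilinear equalities.
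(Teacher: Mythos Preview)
Your proposal is correct and follows the same approach as the paper's own proof: invoke Lemma~\ref{lem:reformulation} for each summand, then Lemma~\ref{lem:convexity} plus closure of convexity under addition for the objective. Your treatment is in fact more careful than the paper's two-sentence proof, since you spell out the separability argument justifying the sum/min interchange, which the paper leaves implicit.
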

    \begin{proof}
    Since each state vector $x_k$ requires a set of coefficients $K_k \in \mathbb{R}^{n \times mk}$, the result follows from the problem definition \eqref{eqn:problem} and Lemma~\ref{lem:reformulation}. Since convexity is preserved under addition, the objective in \eqref{eqn:miqp} is convex by Lemma~\ref{lem:convexity}.
    \end{proof}

    The optimization problem \eqref{eqn:miqp} is a MIQP of the sensor scheduling problem where the decision variables are the filter coefficients $K_k$ and binary sensor schedule $\gamma$. The last constraint allows a filter coefficient to be used only if the corresponding sensor variable is turned on. This constraint can be implemented as a SOS Type-1 constraint in \textsc{Gurobi} or \textsc{CPLEX}. It should be noted that the number of binary variables is $mT$ and the number of continuous variables is $\sum_{k=1}^T nmk = nm\frac{T(T+1)}{2}$. For high-dimensional systems and long horizons, this can quickly become intractable. It would be desirable to have a minimal formulation purely in terms of binary variables $\gamma$, which we leave to future work.

    \begin{rem}[Comparison with \cite{mo2011sensor}] \label{rem:comparison}
    At first glance, Theorem~\ref{thm:miqp} seems similar to the quadratic programming formulation in \cite{mo2011sensor}. However, the key difference is the relationship between the continuous variables and binary variables. In Theorem~\ref{thm:miqp}, there is one indicator constraint for each entry of the matrix $K_k$ which is easy to implement in modern solvers and also leads to fast solve times as evidenced in Section~\ref{section:results}. In \cite{mo2011sensor}, the constraint is more involved. It has the form $g(X) \leq \gamma_i$, where $\gamma_i$ is a binary decision variable and $g$ is the $\ell_0$ norm of the sum of $\ell_1$ norms of the columns of the matrix of continuous variables $X$. This constraint cannot be directly implemented in modern mixed integer solvers. One has to setup indicator variables to model the relationship between $\gamma_i$ and the $\ell_0$ norm. Unfortunately, this does not yield fast solve times as we will show in Section~\ref{section:results}. It should be noted that \cite{mo2011sensor} did not set out to solve an integer program; it relaxed the formulation to compute approximate solutions using a creative iterative weighting technique which yielded good solutions in simulations.
    \end{rem}

    \subsection{Covariance Matrices} \label{subsec:covariance-matrices}
    The last remaining piece is to specify the covariance matrices used in the definition of the functions $c_k$ in \eqref{eqn:lem2-defn}. To do this, we require a representation of any state $x_j$ in terms of the initial state $x_1$ which will make it easier to compute the required covariance matrices.
        \begin{observation}[State Representation] \label{observation:state-rep}
        For any $j \in \mathbb{N}$,
            \begin{equation}
                x_j = A^{j-1} x_1 + \sum_{i=1}^{j-1} A^{j-i-1} w_i,
            \end{equation}
        \end{observation}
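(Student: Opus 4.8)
The statement is just the closed-form solution of the linear recursion $x_{k+1} = A x_k + w_k$ from~\eqref{eqn:dynamics}, so the plan is a straightforward induction on $j$. For the base case $j = 1$, the claimed right-hand side is $A^{0} x_1 + \sum_{i=1}^{0} A^{-i} w_i$, where the empty sum is zero and $A^0 = \mathbb{I}_n$, so it evaluates to $x_1$, matching the left-hand side trivially.

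For the inductive step, I would assume the formula holds for some $j \geq 1$, i.e. $x_j = A^{j-1} x_1 + \sum_{i=1}^{j-1} A^{j-i-1} w_i$, and then apply the dynamics once more:
\begin{equation*}
    x_{j+1} = A x_j + w_j = A\Big( A^{j-1} x_1 + \sum_{i=1}^{j-1} A^{j-i-1} w_i \Big) + w_j = A^{j} x_1 + \sum_{i=1}^{j-1} A^{j-i} w_i + w_j.
\end{equation*}
Absorbing the trailing $w_j = A^{0} w_j = A^{(j+1)-j-1} w_j$ into the sum gives $x_{j+1} = A^{j} x_1 + \sum_{i=1}^{j} A^{(j+1)-i-1} w_i$, which is exactly the claimed identity with $j$ replaced by $j+1$. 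This closes the induction.

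There is no real obstacle here; the only thing to be careful about is the index bookkeeping — in particular, interpreting the empty sum at $j=1$ correctly and verifying that the exponent $A^{j-i-1}$ lines up so that the newly introduced $w_j$ term has exponent $0$. Everything else is mechanical substitution using~\eqref{eqn:dynamics}, and no properties of the noise statistics or of $A$ are needed for this purely algebraic identity.
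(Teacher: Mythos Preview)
Your induction is correct and is exactly the standard argument one would give here; the paper itself offers no proof at all, treating the state-representation formula as an immediate observation from unrolling the recursion~\eqref{eqn:dynamics}. Since your approach simply makes that unrolling explicit, there is nothing to contrast.
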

        
        \begin{observation}[Zero-mean States] \label{observation:zero-mean-state}
        For any $j \in \mathbb{N}$, $\mathbb{E}(x_j) = 0$. This follows from the combination of Observation \ref{observation:state-rep} and the fact that the initial state $x_0$ and process noise $w_t$ are zero-mean random variables.
        \end{observation}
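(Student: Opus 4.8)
The plan is to obtain the zero-mean property by first making the explicit state representation of Observation~\ref{observation:state-rep} rigorous and then applying linearity of expectation. Since Observation~\ref{observation:state-rep} is itself stated without proof, I would first derive it by induction on $j$ directly from the system dynamics~\eqref{eqn:dynamics}; the zero-mean claim of Observation~\ref{observation:zero-mean-state} then follows in a single line.

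\textbf{Induction for the state representation.} For the base case $j = 1$, the summation is empty and $A^{0} = \mathbb{I}_n$, so the claimed formula reduces to $x_1 = x_1$, which holds trivially. For the inductive step, assume $x_j = A^{j-1} x_1 + \sum_{i=1}^{j-1} A^{j-i-1} w_i$. Substituting into $x_{j+1} = A x_j + w_j$ yields
\begin{equation}
    x_{j+1} = A^{j} x_1 + \sum_{i=1}^{j-1} A^{j-i} w_i + w_j = A^{j} x_1 + \sum_{i=1}^{j} A^{j-i} w_i,
\end{equation}
where the last equality absorbs $w_j = A^{0} w_j$ as the $i=j$ summand. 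Rewriting $A^{j-i} = A^{(j+1)-i-1}$ shows this is exactly the target formula with $j$ replaced by $j+1$, completing the induction.

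\textbf{Zero mean.} Taking expectations of the representation and using linearity of expectation,
\begin{equation}
    \mathbb{E}(x_j) = A^{j-1}\, \mathbb{E}(x_1) + \sum_{i=1}^{j-1} A^{j-i-1}\, \mathbb{E}(w_i) = 0,
\end{equation}
because the problem formulation assumes the initial state $x_1$ and each process noise term $w_i$ are zero-mean. (The symbol "$x_0$" appearing in the statement should read "$x_1$", since the model~\eqref{eqn:dynamics} is indexed from $k=1$.)

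\textbf{Expected obstacle.} There is essentially no obstacle here: the argument is a textbook induction followed by linearity of expectation, and no independence or covariance structure is needed for this particular claim. The only places requiring a little care are the empty-sum convention in the base case and the index shift $A^{j-i} = A^{(j+1)-i-1}$ in the inductive step, so that the upper summation limit increments correctly.
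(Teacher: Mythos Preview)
Your proposal is correct and follows exactly the route the paper sketches: the paper's entire argument is the one-line remark that the claim ``follows from the combination of Observation~\ref{observation:state-rep} and the fact that the initial state $x_0$ and process noise $w_t$ are zero-mean,'' and you simply make this precise by supplying the induction for Observation~\ref{observation:state-rep} and then applying linearity of expectation. Your side note that ``$x_0$'' should read ``$x_1$'' is also apt, since the paper indexes the initial state as $x_1$.
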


        Using these two observations, we will establish the structure of the covariance between any two states $x_i$ and $x_j$. 
        
        \begin{observation}[Covariance between States]
        For $j \leq i$, the covariance between states $x_i$ and $x_j$ is given by
            \begin{equation}
                \begin{split}
                    \Sigma_{x_i x_j} &= \mathbb{E} \Bigg[ \Big( A^{i-1} x_1 + \sum_{t=1}^{i-1} A^{i-t-1} w_t \Big) \\
                    &\Big( A^{j-1} x_1 + \sum_{t=1}^{j-1} A^{j-t-1} w_t \Big)'\Bigg] \in \mathbb{R}^{n \times n}\\
                    &= \mathbb{E} \Big( A^{i-1} x_1 x_1' A^{j-1'} + \sum_{t=1}^{j-1} A^{i-t-1} w_t w_t' A^{j-t-1'}\Big)\\
                    &= A^{i-1} \Sigma_{1|0} A^{j-1'} + \sum_{t=1}^{j-1} A^{i-t-1} W (A^{j-t-1})'.
                \end{split}
            \end{equation}
        For $j > i$, $\Sigma_{x_i x_j} = \Sigma_{x_j x_i}'$.
        \end{observation}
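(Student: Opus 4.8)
The plan is to substitute the state representation from Observation~\ref{observation:state-rep} into the definition of $\Sigma_{x_i x_j}$ and then exploit the uncorrelatedness of the primitive random variables to eliminate all cross terms. Since Observation~\ref{observation:zero-mean-state} gives $\mathbb{E}[x_j] = 0$ for every $j$, we have $\Sigma_{x_i x_j} = \mathbb{E}[x_i x_j']$, so no mean-subtraction bookkeeping is needed.

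First, assume $j \le i$ and write $x_i = A^{i-1} x_1 + \sum_{t=1}^{i-1} A^{i-t-1} w_t$ and $x_j = A^{j-1} x_1 + \sum_{s=1}^{j-1} A^{j-s-1} w_s$ using Observation~\ref{observation:state-rep}. Expanding $\mathbb{E}[x_i x_j']$ by bilinearity of expectation produces four groups of terms: an $x_1 x_1'$ term, two mixed terms of the form $x_1 w_s'$ and $w_t x_1'$, and a double sum of $w_t w_s'$ terms (each conjugated by appropriate powers of $A$).

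The second step is to discard the vanishing terms. By the assumptions accompanying the dynamics~\eqref{eqn:dynamics}, $x_1$ is uncorrelated with each $w_t$ and all have zero mean, so $\mathbb{E}[x_1 w_s'] = 0$ and $\mathbb{E}[w_t x_1'] = 0$, which kills the mixed terms. Likewise the process noise is white, i.e.\ $\mathbb{E}[w_t w_s'] = 0$ whenever $t \ne s$ and $\mathbb{E}[w_t w_t'] = W$, so only the diagonal entries of the double sum survive; these run over $t = 1, \dots, \min(i-1, j-1) = j-1$ precisely because $j \le i$. Substituting $\mathbb{E}[x_1 x_1'] = \Sigma_{1|0}$ and $\mathbb{E}[w_t w_t'] = W$ then yields
$$\Sigma_{x_i x_j} = A^{i-1}\Sigma_{1|0}(A^{j-1})' + \sum_{t=1}^{j-1} A^{i-t-1} W (A^{j-t-1})',$$
which is the claimed identity. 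The case $j > i$ follows immediately: covariance matrices satisfy $\Sigma_{x_i x_j} = \mathbb{E}[x_i x_j'] = (\mathbb{E}[x_j x_i'])' = \Sigma_{x_j x_i}'$, and $\Sigma_{x_j x_i}$ is already covered by the first case since $i < j$.

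I do not expect any genuine obstacle here; the one point that requires care is tracking the upper limit of the surviving summation — it is $j-1$, not $i-1$, because a common noise term $w_t$ appears in both expansions only for indices strictly below the smaller horizon $j$. Everything else is a direct application of the zero-mean and white-noise/independence hypotheses stated with~\eqref{eqn:dynamics}.
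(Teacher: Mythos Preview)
Your proposal is correct and follows essentially the same route as the paper: the observation's derivation is exactly the substitution of the state representation from Observation~\ref{observation:state-rep}, expansion of the product, and elimination of cross terms via the zero-mean uncorrelatedness assumptions, with the surviving sum truncated at $j-1$ because $j \le i$. Your treatment is simply a more explicit narration of the same calculation.
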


        Since the measurement is a linear combination of the state \eqref{eqn:dynamics}, we can determine the covariance between any state and measurement variable and also between measurement variables. This is described in the following observations.
        \begin{observation}[Covariance between State and Measurement] \label{obs:state-meas-cov}
        For $j \leq t$, the matrix of covariances between state $x_t \in \mathbb{R}^n$ and all sensor observation $y_j \in \mathbb{R}^m$ is given by
            \begin{equation}
                \begin{split}
                    \Sigma_{x_t y_j} &= \mathbb{E} \Big( x_t (C x_j + v_j)' \Big)\\
                    &= \Sigma_{x_t x_j} C' \in \mathbb{R}^{n \times m}
                \end{split}
            \end{equation}
        \end{observation}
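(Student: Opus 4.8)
The plan is to compute $\Sigma_{x_t y_j}$ directly from its definition by substituting the measurement equation and then showing that the only nuisance term vanishes. First I would observe that by Observation~\ref{observation:zero-mean-state} the state $x_t$ is zero mean, and by the standing assumption the measurement noise $v_j$ is zero mean, so $y_j = C x_j + v_j$ is zero mean as well. Hence $\Sigma_{x_t y_j} = \mathbb{E}[(x_t - \mathbb{E} x_t)(y_j - \mathbb{E} y_j)'] = \mathbb{E}[x_t y_j']$, and there is no need to carry mean-subtraction terms through the calculation.

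Next I would substitute $y_j = C x_j + v_j$ and expand by linearity of expectation:
\begin{equation}
\Sigma_{x_t y_j} = \mathbb{E}\big[x_t (C x_j + v_j)'\big] = \mathbb{E}[x_t x_j']\, C' + \mathbb{E}[x_t v_j'].
\end{equation}
The first term is exactly $\Sigma_{x_t x_j} C'$ (again using that the states are zero mean), and for $j \le t$ it is given explicitly by the preceding observation on the covariance between states. So it remains only to show that the cross term $\mathbb{E}[x_t v_j']$ is the zero matrix.

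For that step I would invoke the state representation in Observation~\ref{observation:state-rep}, writing $x_t = A^{t-1} x_1 + \sum_{i=1}^{t-1} A^{t-i-1} w_i$. Substituting this and using linearity gives $\mathbb{E}[x_t v_j'] = A^{t-1}\mathbb{E}[x_1 v_j'] + \sum_{i=1}^{t-1} A^{t-i-1}\mathbb{E}[w_i v_j']$. By the standing assumption that the initial state $x_1$, the process noises $\{w_i\}$, and the measurement noises $\{v_j\}$ are mutually uncorrelated and zero mean, every expectation on the right-hand side vanishes, so $\mathbb{E}[x_t v_j'] = 0$. Combining this with the previous display yields $\Sigma_{x_t y_j} = \Sigma_{x_t x_j} C'$, as claimed.

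I do not expect a genuine obstacle: the only point requiring care is the vanishing of $\mathbb{E}[x_t v_j']$, and this is immediate once $x_t$ is expanded in terms of $x_1$ and $w_1,\dots,w_{t-1}$, none of which is correlated with $v_j$. Note that the hypothesis $j \le t$ is used only so that $\Sigma_{x_t x_j}$ can be quoted in the closed form of the earlier observation; the identity $\Sigma_{x_t y_j} = \Sigma_{x_t x_j} C'$ itself holds regardless of the ordering of $j$ and $t$.
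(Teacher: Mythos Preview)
Your proposal is correct and follows exactly the route the paper takes: the paper's ``proof'' is literally the two displayed lines in the observation, relying implicitly on $\mathbb{E}[x_t v_j']=0$ from the uncorrelatedness assumptions. You have simply supplied the details the paper leaves to the reader, including the expansion of $x_t$ via Observation~\ref{observation:state-rep} to justify that cross term vanishing.
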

        
        \begin{observation}[Covariance between Measurements] \label{obs:measurement-cov}
        The covariance between measurements $y_i$ and $y_j$ is given by
            \begin{equation}
                \begin{split}
                    \Sigma_{y_i y_j} &= \mathbb{E} \Big( (C x_i + v_i) (C x_j + v_j)' \Big)\\
                    &= C\ \Sigma_{x_i x_j} C' + \mathbb{E} (v_i v_j') \in \mathbb{R}^{m \times m}
                \end{split}
            \end{equation}
            If $i \neq j$ then $\mathbb{E} (v_i v_j') = 0$ (Kalman filter assumptions) and is equal to $V \in \mathbb{S}^m_{++}$ (noise covariance) otherwise.
        \end{observation}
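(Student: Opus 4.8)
The plan is to compute the covariance directly from the measurement equation in \eqref{eqn:dynamics}, reducing the covariance to a raw second moment and then expanding. First I would observe that both measurements are zero-mean: since $y_k = C x_k + v_k$, and since $\mathbb{E}(x_k) = 0$ by Observation~\ref{observation:zero-mean-state} while $v_k$ is zero-mean by assumption, we have $\mathbb{E}(y_k) = 0$ for every $k$. Consequently $\Sigma_{y_i y_j} = \mathbb{E}\big[(y_i - \mathbb{E}(y_i))(y_j - \mathbb{E}(y_j))'\big] = \mathbb{E}(y_i y_j')$. Substituting the measurement model and using bilinearity of expectation, I would expand
\begin{equation}
\mathbb{E}(y_i y_j') = C\,\mathbb{E}(x_i x_j')\,C' + C\,\mathbb{E}(x_i v_j') + \mathbb{E}(v_i x_j')\,C' + \mathbb{E}(v_i v_j').
\end{equation}
The first term is exactly $C \Sigma_{x_i x_j} C'$ because $\mathbb{E}(x_i x_j') = \Sigma_{x_i x_j}$ (again using that the states are zero-mean), and the last term is the claimed $\mathbb{E}(v_i v_j')$.

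The heart of the argument, and the step I expect to be the main obstacle, is showing that the two cross terms $\mathbb{E}(x_i v_j')$ and $\mathbb{E}(v_i x_j')$ vanish. For this I would invoke the state representation in Observation~\ref{observation:state-rep}, which writes $x_i = A^{i-1} x_1 + \sum_{t=1}^{i-1} A^{i-t-1} w_t$ as a linear combination of the initial state $x_1$ and the process noises $w_1, \ldots, w_{i-1}$. By the standing assumption in the problem formulation that $x_1$, the process noises $\{w_t\}$, and the measurement noises $\{v_s\}$ are mutually uncorrelated and zero-mean, every summand in $x_i$ is uncorrelated with $v_j$, so $\mathbb{E}(x_i v_j') = A^{i-1}\mathbb{E}(x_1 v_j') + \sum_{t} A^{i-t-1}\mathbb{E}(w_t v_j') = 0$; the term $\mathbb{E}(v_i x_j')$ vanishes by the identical argument with the roles of $i$ and $j$ interchanged. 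This leaves $\Sigma_{y_i y_j} = C \Sigma_{x_i x_j} C' + \mathbb{E}(v_i v_j')$, as stated.

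Finally I would resolve the noise term by cases. For $i \neq j$, the measurement noises are uncorrelated, so $\mathbb{E}(v_i v_j') = 0$; for $i = j$, the term is the noise covariance $\mathbb{E}(v_i v_i') = V \in \mathbb{S}^m_{++}$ specified in the problem setup. Combining the two cases completes the derivation of the measurement covariance identity.
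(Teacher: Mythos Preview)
Your proposal is correct and follows essentially the same approach as the paper: the paper's derivation is just the two-line expansion shown in the statement itself, implicitly dropping the cross terms $C\,\mathbb{E}(x_i v_j')$ and $\mathbb{E}(v_i x_j')\,C'$ by the uncorrelatedness assumptions. You simply make explicit (via Observation~\ref{observation:state-rep} and Observation~\ref{observation:zero-mean-state}) why those cross terms vanish and why the covariance reduces to the raw second moment, which is a welcome elaboration but not a different route.
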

        
        We now establish the form of the matrices in Theorem~\ref{thm:miqp} whose entries are given by the observations above.
            \begin{equation} \label{eqn:matrix-defns}
                \begin{split}
                    \Sigma_{x_k Y_k} &= \begin{bmatrix}
                    \Sigma_{x_k y_1} & \hdots & \Sigma_{x_k, y_k}
                    \end{bmatrix} \in \mathbb{R}^{n \times mk}, \\
                    \Sigma_{Y_k Y_k} &= \begin{bmatrix}
                    \Sigma_{y_1 y_1} & \hdots & \Sigma_{y_1 y_k}\\
                    \vdots & \ddots & \vdots\\
                    \Sigma_{y_k y_1} & \hdots & \Sigma_{y_k y_k} \\
                    \end{bmatrix} \in \mathbb{S}^{mk}_{++}.
                \end{split}
            \end{equation}

By substituting \eqref{eqn:matrix-defns} into \eqref{eqn:lem2-defn}, the MIQP provided in Theorem~\ref{thm:miqp} is now fully specified. This forms our approach to solving the general sensor scheduling problem.

\section{Evaluation} \label{section:results}
To evaluate our proposed MIQP, we consider two variants involving budget constraints: sensor selection (Section~\ref{res:selection}) and sensor scheduling (Section~\ref{res:scheduling}). For both problems, we compare against 
\begin{itemize}
    \item Mo \textit{et al}. \cite{mo2011sensor}: We convert the quadratic program using an $\ell_0$ constraint to a MIQP using an indicator constraint (see Section~\ref{rem:comparison} for a detailed discussion).
    \item Greedy algorithm: We keep selecting sensors one at a time until the budget constraint is met. In each step, the sensor yielding the highest reduction in error is chosen. For more details, see \cite{jawaid2015submodularity,chamon2020approximate}.
\end{itemize}
We now describe the system parameters. The process matrix $A \in \mathbb{R}^{n \times n}$ is drawn from a uniform distribution on $[0,1]$ and normalized so that the magnitude of the largest eigenvalue is 0.5. The measurement matrix $C \in \mathbb{R}^{m \times n}$ is drawn from a uniform distribution on $[0,1]$. The measurement noise matrix is constructed as $W = LL' \in \mathbb{S}^{n}_{+}$ where $L \in \mathbb{R}^{n \times n}$ is drawn from a uniform distribution on $[0,1]$. Finally, the noise matrix is $V = \sigma^2 \mathbb{I}_{m}$ where $\sigma^2 = 0.01$ and the initial state covariance is $\Sigma_{1|0} = \mathbb{I}_n$. We run each algorithm on 20 trials with each trial drawing a new set of system parameters. The experiments are implemented in \textsc{Gurobi} \cite{gurobi} on an AMD Ryzen 7 2700 8-core processor with 16 GB of RAM.

For both selection and scheduling, the goal is to minimize the final state estimation error. Thus, the cost matrices $Q_k \in \mathbb{R}^{n \times n}$ are defined as $Q_k = 0$ for $k \in [T-1]$ and $Q_T = \mathbb{I}_n$. The definitions of the constraint matrix and vector are given in Section~\ref{section:problem_formulation}. We now proceed to discuss the results.
\begin{figure}
    \centering
    \includegraphics[width=0.80\linewidth]{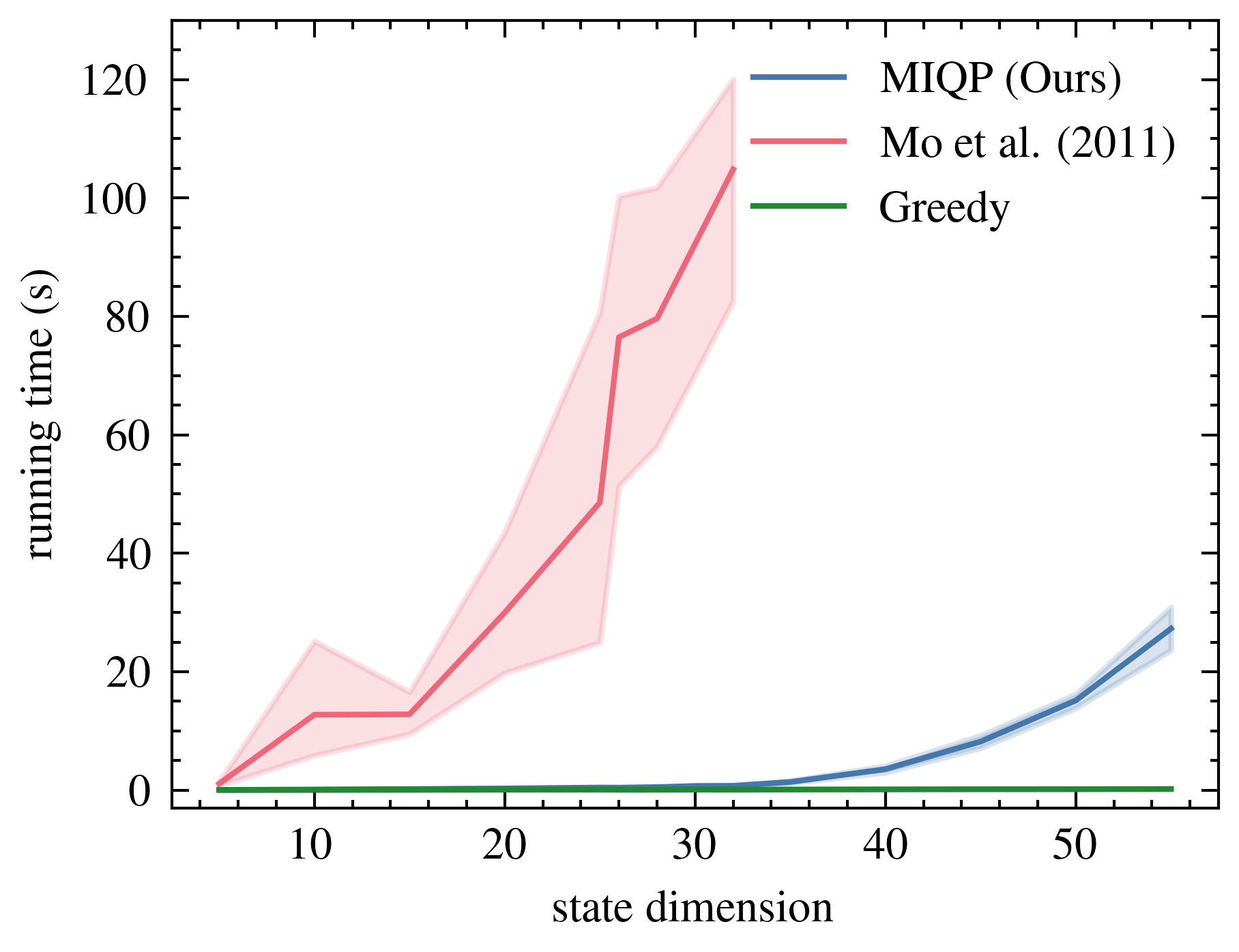}
    \caption{\textbf{Sensor Selection}: Runtime comparison of our MIQP, \cite{mo2011sensor}, and the greedy algorithm. The parameters are $T=3$ with $m=10$ sensors and a budget of $p=5$. The $x-$axis indicates the state dimension while the $y-$axis is the running time measured in seconds.}
    \label{fig:runtime-sensor-placement}
\end{figure}

\subsection{Sensor Selection with Budget Constraints} \label{res:selection}

The first question we seek to answer is how the runtime of the MIQP scales as the system size increases. We set a timeout of 120 seconds for each algorithm. The results of this experiment are shown in Figure~\ref{fig:runtime-sensor-placement}. We see that the approach of \cite{mo2011sensor} does not scale favourably and it times out on systems with $n > 30$. On the other hand, the proposed MIQP solves these instances in under a second which is on average, roughly 50 times faster for a system with 25 states. However, the proposed MIQP also shows an increase in runtime as the system size increases albeit at a much slower rate. In fact, until systems of size $n=35$, the runtime is comparable with greedy. The runtime of greedy is extremely efficient solving all instances in under a second. Further, the approach of \cite{mo2011sensor} exhibits high variance in its runtime which is in contrast to the proposed MIQP and the greedy algorithm which exhibit low runtime variability. In summary, the MIQP has a better runtime when compared to \cite{mo2011sensor} with timings up to 40 times faster while providing \emph{optimal} solutions.

\begin{figure}
    \centering
    \includegraphics[width=0.7\linewidth]{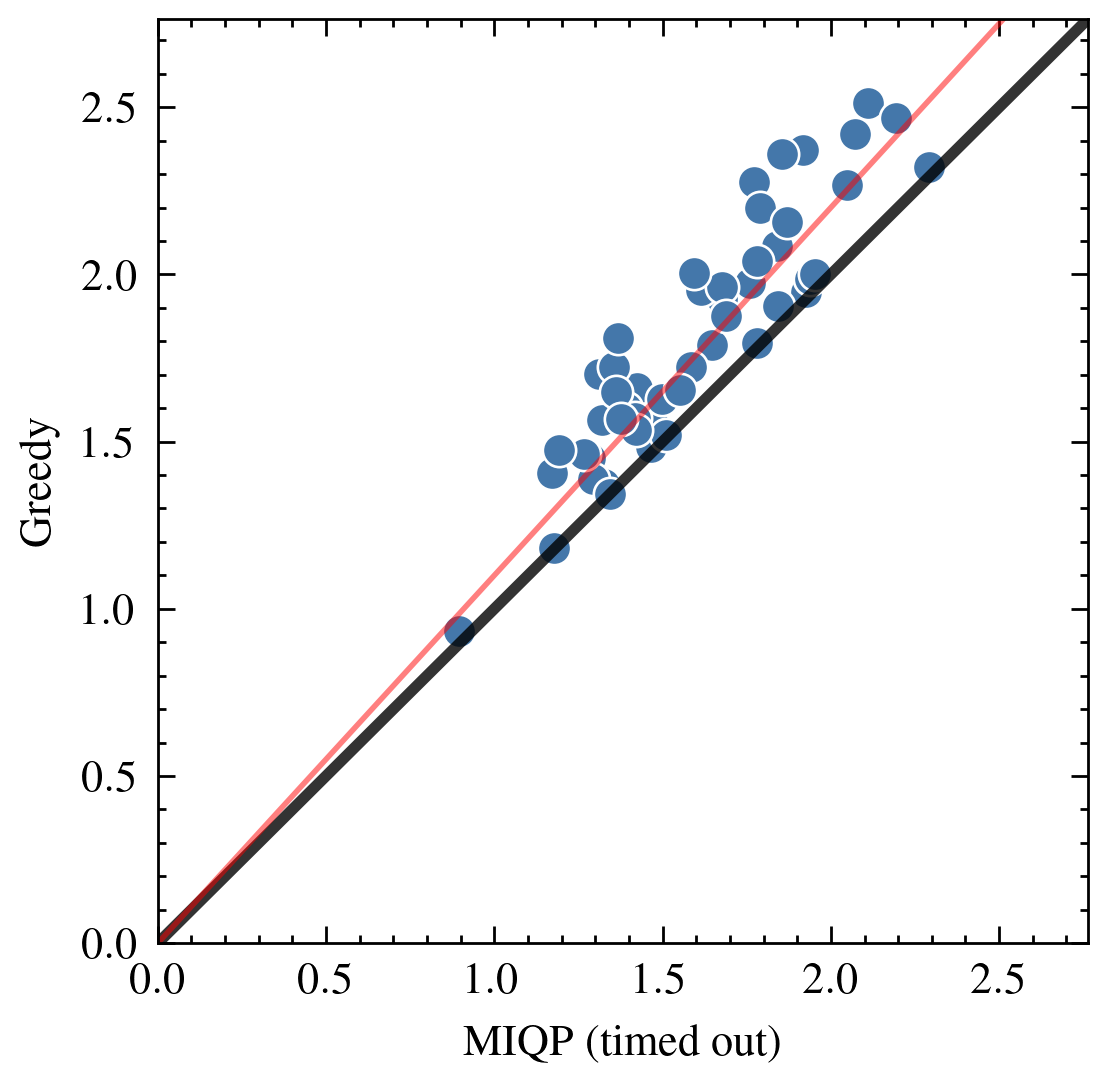}
    \caption{\textbf{Sensor Selection}: Solution quality comparison of our MIQP (timed out after 10 seconds) and the greedy algorithm. The $x$ and $y$ coordinate denote the MIQP (timed out) solution and the greedy solution respectively.}
    \label{fig:timeout-placement}
\end{figure}

We also investigate the quality of the solution returned by the MIQP when given a time out of 10 seconds. In this situation, it would be apt to compare it to greedy since both return approximate solutions. The setup for this experiment involves fixing the state dimension $n=10$, horizon $T=3$, number of sensors $m=25$, and budget $p=5$. We select a time out of 10 seconds as it is practical assumption on the runtime for an algorithm. We draw 50 random instances of the system parameters and run both algorithms to get their solutions. We observed that the MIQP timed out on \emph{all} drawn instances. For each instance $i$, we plot $(x_i,y_i)$ where $x_i$ is the objective value of the solution returned by the MIQP and $y_i$ is the objective value of the greedy solution on instance $i$. We plot these points in Figure~\ref{fig:timeout-placement}. We see that all points lie above the black line $y=x$ indicating that even in this challenging setting, the MIQP outperforms greedy on all instances even though it times out. The red line is the function $y = 1.1x$ and thus points that lie above it are instances where the greedy obtains solutions that are at least 10\% worse than the MIQP solution. We recorded the mean error difference to be 13\%. To summarize, these results suggest the 10 second timed out MIQP returns solutions of quality better or equal to that of the greedy algorithm. The added benefit is that the MIQP gives a certificate of suboptimality even though it times out whereas the greedy algorithm, in general, cannot.

\subsection{Sensor Scheduling with Budget Constraints} \label{res:scheduling}
We now study the runtime of each approach for sensor scheduling problems. 
 The results are shown in Figure~\ref{fig:runtime-sensor-scheduling}. Note that sensor scheduling is a more challenging problem than sensor selection as one has to select a sensor subset for each time step. We see the approach of \cite{mo2011sensor} can only tackle systems of size up to $n=14$ before timing out (set to 100 seconds). In contrast, our MIQP solves the same instances in under a second giving improvements of up to 40 to 60 times. The greedy remains extremely efficient and provides solutions in under a second for all instances. The summary here is that our MIQP can tackle systems of size up to $n=35$ while providing optimal solutions before starting to display an increase in runtime, thereby demonstrating its effectiveness.

Our final experiment compares the solution quality of the MIQP, when given a time out of 10 seconds, to the greedy algorithm. This follows the same system setup used in the second experiment for sensor selection (Section~\ref{res:selection}) which generates 50 random system instances. In this setting, the MIQP was observed to time out on \emph{all} drawn instances. The results are shown in Figure~\ref{fig:timeout-schedule}. The observations are similar to that in sensor selection. We note the MIQP always returns solutions whose quality is better or equal to that of the greedy algorithm. In addition, the points above the red line are the instances where the greedy solution is at least 10\% worse. When compared to the results in sensor selection (Figure~\ref{fig:timeout-placement}), the number of points above the red line is greater in Figure~\ref{fig:timeout-schedule}. This indicates the performance difference between greedy and the MIQP is larger in sensor scheduling which is also evidenced by the average error difference recorded to be $19\%$.

\begin{figure}
    \centering
    \includegraphics[width=0.8\linewidth]{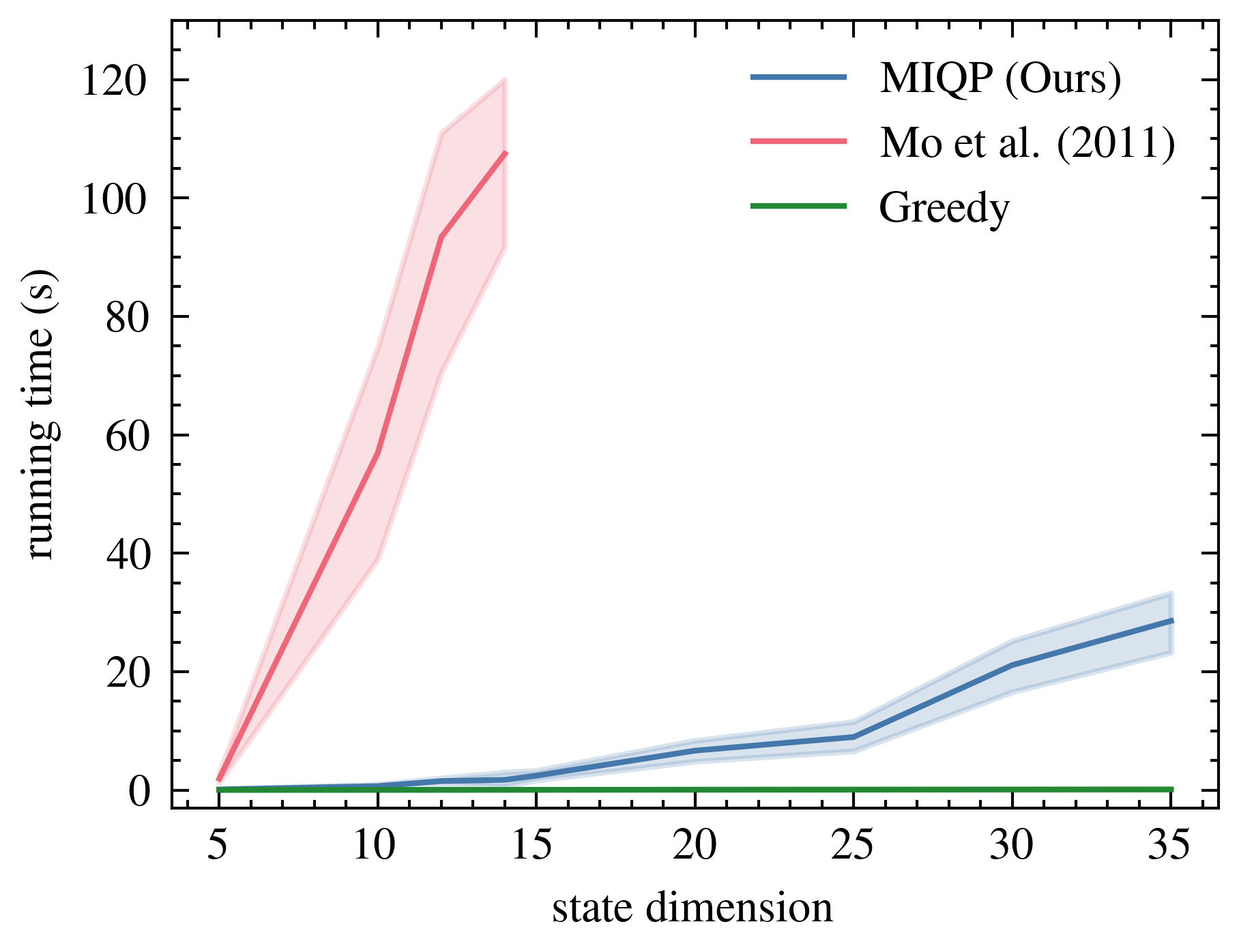}
    \caption{\textbf{Sensor Scheduling}: Runtime comparison of different algorithms on varying system sizes. The system parameters are $T=3$, $m=10$, and budget $p=5$. The runtime is measured in seconds indicated on the $y$-axis. }
    \label{fig:runtime-sensor-scheduling}
\end{figure}

\begin{figure}
    \centering
    \includegraphics[width=0.7\linewidth]{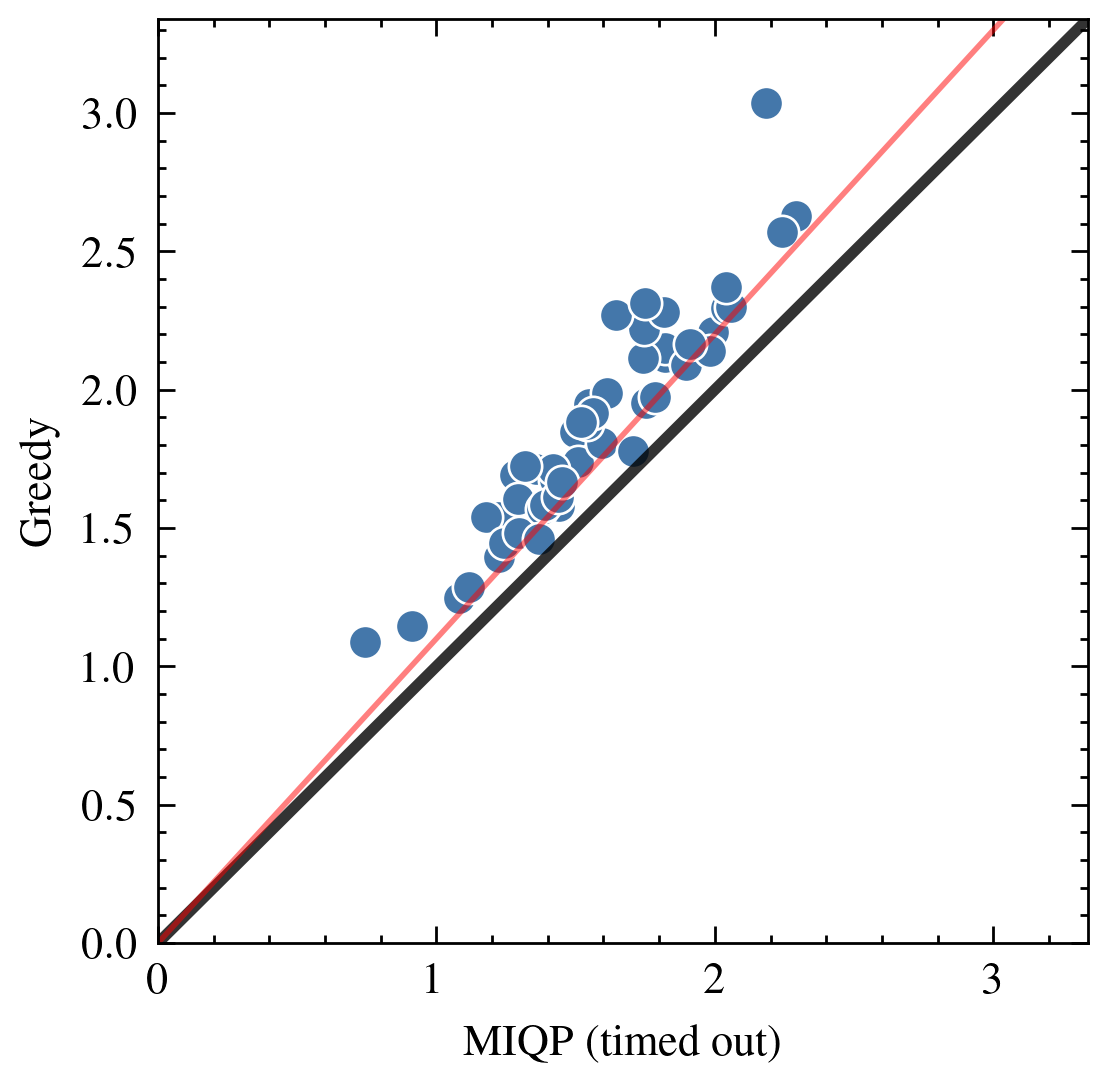}
    \caption{\textbf{Sensor Scheduling}: Comparison of solutions the MIQP (timed out after 10 seconds) and the greedy algorithm. The points above the black line indicate MIQP obtaining better solutions than greedy. The greedy solution of points above the red line are at least 10\% worse than the MIQP solution.}
    \label{fig:timeout-schedule}
\end{figure}
\section{Conclusions} \label{section:conclusions}

We studied the generalized version of the sensor scheduling problem capturing problems such as sensor placement, scheduling, and LQG sensing design. Our approach was rooted in mixed integer optimization. We formulated a mixed integer quadratic program by exploiting the optimality of the Kalman filter. In simulations, we showed the effectiveness of the approach in computing optimal solutions to systems with 30 to 50 states. In addition, the solver also returned better quality solutions over the popular greedy algorithm when constrained to time out within a few seconds. Looking forward, we believe a minimal formulation that solely contains integer variables would be desirable. This is because the number of continuous variables in the MIQP scales quadratically with the time horizon whereas the number of integer variables only depends on the number of sensors. Further, it would also be interesting to study the structure of the relaxed problem and identify any avenues for the development of approximation algorithms.


\Urlmuskip=0mu plus 1mu
\bibliographystyle{IEEEtran}
\bibliography{mybib}
\end{document}